\newcommand\1{\lower 9pt\hbox{\underbar{}}}
\numberwithin{equation}{section}
\newtheorem {theorem}                   {Theorem}
\newtheorem {lemma}[equation]{Lemma}
\newtheorem {Proposition}[equation]     {Proposition}
\newtheorem {Claim}[equation]      {Claim}
\theoremstyle{definition}
\newtheorem {defi}[equation]{Definition}
\newtheorem {Remark}[equation]          {Remark}
\newcommand{\pr} {\smallskip\noindent{\bf Proof\,\,}}
\newcommand{\pp}[2]{\frac{\partial#1}{\partial#2}}
\begin{document}

\title{On the volume elements of a manifold with transverse zeroes}

\author{Robert Cardona}
\address{{Laboratory of Geometry and Dynamical Systems, Department of Mathematics}, Universitat Polit\`{e}cnica de Catalunya and Barcelona Graduate School of Mathematics BGSMath, EPSEB, Avinguda del Doctor Marañ\'{o}n 44-50, 08028, Barcelona, Spain }
\email{robert.cardona@upc.edu}
\author{Eva Miranda}
\thanks{{ E. Miranda  is supported by the Catalan Institution for Research and Advanced Studies via an ICREA Academia Prize 2016. Robert Cardona acknowledges financial support from the Spanish Ministry of Economy and Competitiveness, through the Mar\'ia de Maeztu Programme for Units of Excellence in R\&D (MDM-2014-0445). Both authors are supported by the grants reference number MTM2015-69135-P (MINECO/FEDER) and reference number 2017SGR932 (AGAUR).  Part of the work that lead to this paper took place at the Fields Institute in Toronto while the second author was invited professor during the Focus Program on Poisson Geometry and Physics in July 2018. This material is based upon work supported by the National Science Foundation under Grant No. DMS-1440140 while the authors were in residence at the Mathematical Sciences Research Institute in Berkeley, California, during the Fall 2018 semester. }}

\address{{Laboratory of Geometry and Dynamical Systems Department of Mathematics}, Universitat Polit\`{e}cnica de Catalunya/Barcelona Graduate School of Mathematics BGSMath, EPSEB, Avinguda del Doctor Marañ\'{o}n 44-50, 08028, Barcelona, Spain \\ and
\\ IMCCE, CNRS-UMR8028, Observatoire de Paris, PSL University, Sorbonne
Universit\'{e}, 77 Avenue Denfert-Rochereau,
75014 Paris, France}
\email{eva.miranda@upc.edu}

\begin{abstract}
Moser proved in 1965  in his seminal paper \cite{moser} that two volume forms on a compact manifold can be conjugated by a diffeomorphism, that is to say they are equivalent, if and only if their associated cohomology classes in the top cohomology group of a manifold coincide. In particular, this yields a classification of compact symplectic surfaces in terms of De Rham cohomology.
  In this paper we generalize these results for volume forms admitting transversal zeroes. In this case there is also a cohomology capturing the classification: the relative cohomology with respect to the critical hypersurface. We compare this classification scheme with the classification  of Poisson structures on surfaces which are symplectic away from a hypersurface where they fulfill a transversality assumption ($b$-Poisson structures). We do this using the desingularization technique introduced in \cite{gmw1} and extend it to $b^m$-Nambu structures.
\end{abstract}
\maketitle

\section{Introduction}

Moser path method is one of the most commonly used  methods in symplectic geometry and topology to prove that two given symplectic structures are equivalent. It first appeared in in Moser's celebrated article \cite{moser} where volume forms on a compact manifold are classified. In particular in dimension 2, a volume form determines a symplectic structure on a surface and Moser's theorem gives a classification of symplectic surfaces. Moser's classification is given in terms of De Rham Cohomology: two forms belong to the same cohomology class if and only if there exists a diffeomorphism conjugating them. Forms conjugated by a diffeomorphism are called \emph{equivalent} for short in this paper.

If we allow the top degree form to have transverse zeroes, asking for the same cohomology class is not enough  to apply Moser's path method. In this case relative cohomology captures the additional information needed.

Following  \cite{God} recall that given
a smooth manifold $M$  and a closed submanifold $Z$, with $i:Z \hookrightarrow M$ the inclusion. The \textbf{relative De Rham cohomology}  groups of $Z$ are given by the complex
$$  \Omega^p(M,Z)= \{ \alpha \in \bigwedge\nolimits^{\!p}T^*M \enspace | \enspace i^*\alpha=0 \}. $$

We will see that in this new scenario additionally having the same relative cohomology allows to apply the Moser's trick.

 Even if the existence of transversal zeroes allows non-orientability in this picture, we will assume our manifolds to be orientable. For the sake of simplicity and mimicking the surface case we will call these volume forms \textbf{folded volume forms}.

In the last part of this paper we study the compatibility between the classification of $b^m$-symplectic surfaces obtained by Geoff Scott in \cite{S} and   our classification scheme. This affinity is studied using the desingularization procedure developed in \cite{gmw1} for $2$-forms. When $m$ is odd, the desingularized structure is a folded-symplectic one. We will see that two equivalent $b^{2k+1}$-symplectic structures are sent to equivalent folded-symplectic forms. We extend this desingularization procedure to volume forms and prove an extension of this result for volume forms.

\hfill \newline
\textbf{Acknowledgements:} We are thankful to Rui Loja Fernandes, Ralph Klaasse, Ioan Marcut and Marco Zambon for useful comments on the first version of this paper.

\section{Preliminaries}

\subsection{Folded singularities and diffeomorphisms of hypersurfaces}

We will be studying top power forms that vanish satisfying a transversal condition\footnote{ This condition can be generalized replacing  standard tranversality by transversality à la Thom.}. Mimicking from the case of $2$-forms \cite{Unf, cannas} we call these structures \textbf{folded volume forms.} As a consequence of transversality, the vanishing set for the top power will always be a closed hypersurface called \textbf{the critical set} and that may have several connected components. In order to have an equivalence relation between these singular forms, the following condition will be imposed on this critical set.

\begin{defi}

Two sets of smooth disjoint oriented hypersurfaces $(S_1,...,S_n)$ and $(S_1',...,S_n')$ are \emph{diffeomorphically equivalent} if there is an orientation-preserving diffeomorphism $\varphi: M \rightarrow M$ mapping the first set to the second one preserving orientations.

\end{defi}

In the space of $n$ disjoint oriented hypersurfaces on a manifold $M$ this condition defines an equivalence relation. Then for a set of $n$ disjoint oriented hypersurfaces $(S_1,...,S_n)$ we denote $[(S_1,...,S_n)]$ its class in the space of diffeomorphically equivalent classes.

\begin{Remark}
When the hypersurfaces are the same we denote by $\operatorname{Diff}(M,Z)$ the set of diffemorphisms  preserving the set of hypersurfaces $Z$.
\end{Remark}

\subsection{A crash course on $b^m$-manifolds}

The category of $b$-manifolds was developed by Melrose \cite{Mel}, in order to study manifolds with boundary. Most of the definitions can be used replacing the boundary by any given hypersurface of the manifold:
\begin{defi}
 A $b$-manifold $(M, Z)$ is an oriented manifold $M$  with an oriented hypersurface
$Z$.
\end{defi}
In order to have the $b$-category we introduce the notion of $b$-map.
\begin{defi}
A $b$-map is a map
$$f : (M_1, Z_1) \longrightarrow (M_2, Z_2)$$
so that $f$ is transverse to $Z_2$ and $f^{-1} (Z_2) = Z_1$.
\end{defi}
Not only maps have to be redefined in the $b$-category, but also vector fields and differential forms:
\begin{defi}
A $b$-vector field on a $b$-manifold $(M,Z)$ is a vector field which is tangent to $Z$ at every point $p\in Z$.
\end{defi}
These vector fields form a Lie subalgebra of vector fields on $M$. Let $t$ be a defining function of $Z$ in a neighborhood $U$ and let $(t,x_2,...,x_n)$ be a chart on it. Then the set of $b$-vector fields on $U$ is a free $C^\infty(U)$-module with basis
$$\left( t \pp{}{t}, \pp{}{x_2},\ldots, \pp{}{x_n}\right).$$
We deduce that the sheaf of $b$-vector fields on $M$ is a locally free $C^\infty$-module and therefore it is given by the sections of a vector bundle on $M$. This vector bundle is called \textbf{the $b$-tangent bundle} and denoted by $^b TM$. Its dual bundle is called \textbf{the $b$-cotangent bundle} and is denoted $^bT^*M$.

By considering sections of powers of this bundle, we can form the so-called \textbf{$b$-forms}.

\begin{defi}
Let $(M^{2n},Z)$ be a $b$-manifold and $\omega\in \,^b \Omega^2(M)$ a closed $b$-form. We say that $\omega$ is $b$-symplectic if $\omega_p$ is of maximal rank as an element of $\Lambda^2(\,^b T_p^* M)$ for all $p\in M$.
\end{defi}

In the class of Poisson manifolds a distinguished subclass is that of $b$-Poisson manifolds which is indeed formed by $b$-symplectic manifolds together with a bi-vector field naturally associated to the $b$-symplectic forms.

\begin{defi}
Let $(M^{2n},\Pi)$ be an oriented Poisson manifold. Let the map
$$p\in M\mapsto(\Pi(p))^n\in \Lambda^{2n}(TM)$$
be transverse to the zero section. Then $\Pi$ is called a $b$-Poisson structure on $M$. The hypersurface $Z$ where the multivectorfield $\Pi^n$ vanishes,
$$Z=\{p\in M|(\Pi(p))^n=0\}$$
 is called the critical hypersurface of $\Pi$. The pair $(M,\Pi)$ is called a $b$-Poisson manifold.
\end{defi}
Asking the transversality condition is equivalent to saying that $0$ is a regular value of the map $p\longrightarrow (\Pi(p))^n$. The hypersurface $Z$ has a defining function obtained by dividing this map by a non-vanishing section of $\bigwedge^{2n}(TM)$.

The set of $b$-symplectic manifolds is  in one-to-one correspondence with the set of $b$-Poisson manifolds.

This correspondence is proved in \cite{GMP2} and can be formulated as
\begin{Proposition}
A two-form $\omega$ on a $b$-manifold $(M,Z)$ is $b$-symplectic if and only if its dual bivector field $\Pi$ is a $b$-Poisson structure.
\end{Proposition}

In this context we have a normal form theorem analogous to Darboux theorem for symplectic manifolds. This results is also proved in \cite{GMP2}.

\begin{theorem}[{\bf $b$-Darboux theorem}]\label{thm:bdarboux}
Let $(M,Z, \omega)$ be a $b$-symplectic manifold. Then, on a neighborhood of a point $p\in Z$, there exist coordinates $(x_1,y_1,...,x_n,y_n)$  centered at $p$ such that
$$\omega=\frac{1}{x_1}\,dx_1\wedge dy_1 + \sum_{i=2}^{n} dx_i\wedge dy_i.$$
\end{theorem}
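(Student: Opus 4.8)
The plan is to reduce the statement to a cosymplectic Darboux theorem along $Z$ together with a Moser-type deformation argument carried out entirely within the $b$-category. First I would fix a local defining function $t$ for $Z$ near $p$, so that $Z=\{t=0\}$, and use the frame $(\frac{dt}{t}, dx_2, \dots, dx_n)$ of the $b$-cotangent bundle to write
$$\omega = \frac{dt}{t}\wedge \alpha + \beta,$$
where $\alpha$ is a smooth $1$-form and $\beta$ a smooth $2$-form, neither containing $\frac{dt}{t}$. Since $d(\frac{dt}{t})=0$, closedness of $\omega$ splits into $d\beta=0$ together with the vanishing of $\frac{dt}{t}\wedge d\alpha$; restricting along $i:Z\hookrightarrow M$ I would then extract the pair $(\theta,\eta):=(i^*\alpha, i^*\beta)$ as the intrinsic data that $\omega$ imprints on $Z$.

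Second, I would verify that $(\theta,\eta)$ is a cosymplectic structure on $Z$: both forms are closed, and the $b$-symplectic (maximal rank) condition on $\omega_p$ translates precisely into $\theta\wedge\eta^{\,n-1}\neq 0$ at $p$, i.e. $\theta$ is nonvanishing and $\eta$ is symplectic on $\ker\theta$. By the Darboux theorem for cosymplectic manifolds one can then find coordinates $(y_1, x_2, y_2, \dots, x_n, y_n)$ on $Z$ centered at $p$ with $\theta = dy_1$ and $\eta = \sum_{i=2}^n dx_i\wedge dy_i$.

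Third, I would extend these functions off $Z$ into a tubular neighborhood, keeping $t$ as the transverse coordinate (after rescaling so that $x_1 := t$), which produces the candidate model
$$\omega_0 = \frac{dx_1}{x_1}\wedge dy_1 + \sum_{i=2}^n dx_i\wedge dy_i$$
agreeing with $\omega$ to leading order along $Z$. The final step is a relative Moser argument: interpolating $\omega_s = \omega_0 + s(\omega-\omega_0)$ and solving the Moser equation $\iota_{X_s}\omega_s = -\mu$, where $\mu$ is a primitive of $\omega-\omega_0$, for a time-dependent $b$-vector field $X_s$ whose flow fixes $Z$ and conjugates $\omega$ to $\omega_0$.

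The hard part will be this last step: one must guarantee that the difference $\omega - \omega_0$ admits a primitive $\mu$ that is itself a $b$-form vanishing appropriately along $Z$, so that the vector field $X_s$ obtained by inverting the nondegenerate $b$-form $\omega_s$ is a genuine $b$-vector field, tangent to $Z$. Only then does its flow remain within the $b$-category, preserve the critical hypersurface, and stay defined on a full neighborhood for all $s\in[0,1]$; controlling the singular $\frac{dt}{t}$ term throughout, and ensuring the relative cohomological obstruction to the existence of $\mu$ vanishes, is what demands the careful bookkeeping here.
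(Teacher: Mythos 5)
The paper does not actually prove this theorem: it is quoted from \cite{GMP2}, so there is no in-paper argument to compare against. Your strategy --- the decomposition $\omega=\frac{dt}{t}\wedge\alpha+\beta$ near $Z$, extraction of the induced cosymplectic pair $(i^*\alpha,i^*\beta)$ on $Z$, a cosymplectic Darboux theorem, and a relative Moser argument in the $b$-category --- is essentially the standard proof and is sound in outline. Two points need tightening. First, closedness of $\omega$ does not split into $d\beta=0$ and $\frac{dt}{t}\wedge d\alpha=0$ separately for an arbitrary decomposition; it only gives $\frac{dt}{t}\wedge d\alpha=d\beta$, and it is the smoothness of the right-hand side that forces $dt\wedge d\alpha$ to be divisible by $t$, whence $i^*\alpha$ and $i^*\beta$ are closed --- which is all you need, but it should be stated that way (or one should use the Laurent decomposition with $\alpha=\pi^*\hat{\alpha}_0$, for which the splitting genuinely holds). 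Second, the ``hard part'' you flag at the end essentially closes itself once you exploit the cosymplectic Darboux coordinates: arrange $i^*\alpha=dy_1$ and take $\alpha=\pi^*(dy_1)$ in the Laurent form, so that the singular parts of $\omega$ and $\omega_0$ coincide exactly on a whole neighborhood of $p$; then $\omega-\omega_0$ is an honest smooth closed $2$-form whose pullback to $Z$ vanishes, and the relative Poincar\'e lemma (the same statement invoked in Lemma \ref{wein} of this paper) furnishes a primitive $\mu$ with $\mu|_Z=0$. Inverting the nondegenerate $b$-form $\omega_s$ on $-\mu$ then automatically yields a vector field vanishing along $Z$, so its flow is defined for all $s\in[0,1]$ on a possibly smaller neighborhood, fixes $Z$, and carries $\omega_0$ to $\omega$. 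With those two repairs your proposal is a complete and correct proof, matching the route taken in \cite{GMP2} and \cite{S}.
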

Note that with this chart, the symplectic foliation of $(M,\Pi)$ has a specific form. It has two open subsets where the Poisson structure has maximal rank given by $\{x_1>0 \}$ and $\{ x_1<0 \}$. The hyperplane $\{x_1=0\}$ contains leaves of dimension $2n-2$ given by the level sets of $y_1$.

One of the research directions has been to generalize $b$-structures and consider more
degenerate singularities of the Poisson structure. This is the case of $b^m$-Poisson structures, for which $\omega^n$ has a singularity of $A_n$-type in Arnold’s list of simple singularities \cite{AN1} \cite{AN2}.
It is convenient, as in the $b$-case, to consider the dual approach and work with forms for their study.

\begin{defi} A symplectic $b^m$-manifold is a pair $(M^{2n}
, Z)$ with a closed $b^m$-two form $\omega$ which
has maximal rank at every $p\in M$.
\end{defi}

Such as in the $b$-symplectic case,  a $b^m$-Darboux theorem holds,
\begin{theorem}[$b
^m$-Darboux theorem, \cite{gmw1}]
    Let $\omega$ be a $b^m$-symplectic form on $(M^{2n}
, Z)$ and $p\in
Z$. Then we can find a coordinate chart $(x_1,y_1,...,x_n,y_n)$ centered at $p$ such that the hypersurface $Z$ is locally defined by $\{y_1=0\}$ and
$$ \omega = dx_1 \wedge  \tfrac{dy_1}{y_1^m} + \sum_{i=2}^n dx_i \wedge dy_i .$$
\end{theorem}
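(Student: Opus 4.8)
The plan is to follow the strategy of the classical Darboux theorem and of its $b$-analogue (Theorem~\ref{thm:bdarboux}): first isolate the singular direction of $\omega$ and put it in model form, then apply an ordinary Moser argument to the smooth transverse part. First I would fix an auxiliary defining function $t$ for $Z$ near $p$ and record the local structure of a closed $b^m$ two-form. Writing
$$\omega = \frac{dt}{t^m}\wedge \alpha + \beta,$$
with $\alpha$ a smooth one-form and $\beta$ a smooth two-form, both without a $dt$-component, one Taylor expands $\alpha=\sum_k \alpha_k(z)\,t^k$ in powers of $t$ and uses $d\omega=0$. Since $d(\tfrac{dt}{t^m})=0$, closedness reads $-\tfrac{dt}{t^m}\wedge d_Z\alpha + d\beta=0$, and matching the genuinely singular powers $t^{k-m}$ against the smooth term $d\beta$ forces $d_Z\alpha_k=0$ for $k<m$. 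In particular the pullback $i^*\alpha$ of the leading one-form to $Z$ is closed, and the maximal-rank hypothesis forces it to be nowhere zero: the most singular term of $\omega^n$ is proportional to $\tfrac{dt}{t^m}\wedge\alpha_0\wedge\beta_0^{\,n-1}$, which degenerates wherever $\alpha_0=i^*\alpha$ vanishes.

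Next I would straighten this leading datum. Since $i^*\alpha$ is a nonvanishing closed one-form on $Z$, the Poincar\'e lemma produces, on a smaller neighbourhood inside $Z$, a function whose differential is $i^*\alpha$; I take this as the first coordinate $x_1$ and extend it off $Z$. Absorbing the remaining $t$-dependence and the exact and higher-order terms into a new defining function $y_1$ (so that $Z=\{y_1=0\}$ still holds), one arranges the singular part of $\omega$ to be exactly $dx_1\wedge \tfrac{dy_1}{y_1^m}$. This bookkeeping — rechoosing the defining function so that the coupling between the $\tfrac{dy_1}{y_1^m}$ factor and the smooth tail disappears while keeping $Z$ cut out by $y_1$ — is the delicate point, and is the $b^m$-analogue of the normalization carried out in the $b$-case.

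Finally, what remains after subtracting $dx_1\wedge\tfrac{dy_1}{y_1^m}$ is a smooth closed two-form $\beta'$ which, by the maximal-rank condition, restricts to a symplectic form on the $(2n-2)$-dimensional transverse directions complementary to $x_1,y_1$. I would then apply a relative Darboux--Moser argument in the variables $(x_2,y_2,\ldots,x_n,y_n)$, treating $(x_1,y_1)$ as parameters, to bring $\beta'$ into the standard form $\sum_{i=2}^n dx_i\wedge dy_i$. The main obstacle is to keep the entire isotopy inside the class of $b^m$-forms: the generating vector fields of the Moser flow must be $b^m$-vector fields, tangent to $Z$, so that $Z=\{y_1=0\}$ is preserved and no new singular terms are created along the path $\omega_s=\omega_0+s(\omega-\omega_0)$. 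Verifying that each $\omega_s$ remains of maximal rank and that the primitive of $\tfrac{d}{ds}\omega_s$ can be chosen smooth and tangent to $Z$ is where the real work lies; once this is established, the time-one flow is the desired $b^m$-diffeomorphism and produces the claimed chart.
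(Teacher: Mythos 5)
The paper does not prove this statement: it is quoted verbatim from \cite{gmw1}, so there is no in-paper argument to compare yours against. Judged on its own terms, your outline follows the standard route of \cite{GMP2} and \cite{gmw1} (Laurent/Scott decomposition near $Z$, nonvanishing of the leading closed one-form $i^*\alpha$ from the maximal-rank condition, straightening it to $dx_1$, then a relative Moser argument for the smooth remainder), and the first and last stages are essentially right as sketched.

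The genuine gap is the middle step, which you flag as ``the delicate point'' but do not carry out: rechoosing the defining function so that $\tfrac{dt}{t^m}\wedge\alpha$ becomes exactly $dx_1\wedge\tfrac{dy_1}{y_1^m}$ with no residual coupling to the smooth part. For $m=1$ this is comparatively soft, but for $m\ge 2$ the expression $\tfrac{dy_1}{y_1^m}$ is not invariant under an arbitrary change of defining function $t\mapsto y_1=t(1+O(t))$: expanding $d\bigl(-\tfrac{1}{(m-1)y_1^{m-1}}\bigr)$ produces correction terms in every intermediate order $t^{k-m}$, $0<k<m$, and one must show these can be absorbed using the closed forms $\hat\alpha_1,\dots,\hat\alpha_{m-1}$ of the Laurent series. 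This is precisely why the $b^m$-category carries the extra jet datum $j_Z$ (visible in the statement of Theorem \ref{scott} as the surface $(M,Z,j_Z)$), and a proof that never mentions how the $(m-1)$-jet of the defining function enters cannot be complete. Secondly, in the final Moser step you should note explicitly that the path $\omega_s$ is nondegenerate near $p$ because the model and $\omega$ are arranged to agree at $p$ as elements of $\Lambda^2({}^bT_p^*M)$ (a pointwise linear-algebra normalization you skip), and that the primitive of $\tfrac{d}{ds}\omega_s$ can be taken to vanish at $p$ and tangent to $Z$; without the pointwise agreement the convexity argument for nondegeneracy of $\omega_s$ does not get off the ground.
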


Dualizing we obtain the Darboux form for the $b^m$-Poisson bivector field,
$$ \Pi= y_1^m \pp{}{x_1} \wedge \pp{}{y_1} + \sum_{i=2}^n \pp{}{x_1} \wedge \pp{}{y_1} .$$

A decomposition for these forms is given in \cite{S}.

\begin{defi} A Laurent Series of a closed $b^m$-form $\omega$ is a decomposition of $\omega$ in a tubular
neighborhood $U$ of $Z$ of the form
	$$ \omega= \frac{dx}{x^m}\wedge ( \sum_{i=0}^{m-1} \pi^*(\hat{\alpha}_i)x^i)+\beta, $$
where $\pi:U\rightarrow Z$ is the projection, where each $\hat{\alpha}_i$ is a closed form on $Z$, and $\beta$ is form on U.
\end{defi}
And there is a result concerning this decomposition of $\omega$.
\begin{Proposition}
 In a tubular neighborhood of $Z$, every closed $b^m$-form $\omega$ can be written
in a Laurent form and the restriction of $\sum_{i=0}^{m-1} \pi^*(\hat{\alpha}_i)x^i$ and $\beta$ to $Z$ are well-defined closed 1 and 2-forms respectively.
\end{Proposition}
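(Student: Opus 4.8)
The plan is to work in a collar $U\cong Z\times(-\epsilon,\epsilon)$ of $Z$ adapted to the projection $\pi$ and to a defining function $x$ of $Z$, so that the exterior derivative splits as $d=d_Z+dx\wedge\partial_x$, where $d_Z$ is the $Z$-directional differential, and every form on $U$ separates into a part containing $dx$ and a part free of $dx$. Because the $b^m$-cotangent bundle is locally generated by $\frac{dx}{x^m}$ together with pullbacks of forms on $Z$, one may first write $\omega=\frac{dx}{x^m}\wedge\alpha+\gamma$ with $\alpha$ a smooth $1$-form (taken with no $dx$-component, since such a component is annihilated by the wedge) and $\gamma$ a smooth $2$-form, both genuine de Rham forms. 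Existence of the Laurent form is then immediate from Taylor's theorem: expanding $\alpha$ in the normal variable to order $m-1$ gives $\alpha=\sum_{i=0}^{m-1}x^i\,\pi^*\hat\alpha_i+x^m r$, where $\hat\alpha_i:=\frac{1}{i!}(\partial_x^i\alpha)\big|_Z$ is a $1$-form on $Z$ and $r$ is a smooth remainder; wedging with $\frac{dx}{x^m}$ converts $x^m r$ into the smooth term $dx\wedge r$, which I absorb with $\gamma$ into $\beta$, obtaining $\omega=\frac{dx}{x^m}\wedge\big(\sum_{i=0}^{m-1}\pi^*\hat\alpha_i\,x^i\big)+\beta$.

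The heart of the argument is to exploit $d\omega=0$ to force each residue $\hat\alpha_i$ to be closed. Using $d(\pi^*\hat\alpha_i)=\pi^*(d\hat\alpha_i)$ and $d\big(\frac{dx}{x^{m-i}}\big)=0$, I compute $d\omega=-\sum_{i=0}^{m-1}\frac{dx}{x^{m-i}}\wedge\pi^*(d\hat\alpha_i)+d\beta$. Writing $\beta=\beta_0+dx\wedge\beta_1$ with $\beta_0,\beta_1$ smooth and $dx$-free, the vanishing of the $dx$-free part of $d\omega$ yields $d_Z\beta_0=0$, and the vanishing of the $dx$-part yields $\sum_{i=0}^{m-1}\frac{1}{x^{m-i}}\pi^*(d\hat\alpha_i)=\partial_x\beta_0-d_Z\beta_1=:S(x)$ with $S$ smooth. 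Multiplying by $x^m$ makes the left-hand side the polynomial $\sum_{i=0}^{m-1}x^i\,\pi^*(d\hat\alpha_i)$ of degree at most $m-1$, whereas $x^mS(x)$ vanishes to order $m$ at $x=0$; matching Taylor coefficients up to order $m-1$ forces $\pi^*(d\hat\alpha_i)=0$, hence $d\hat\alpha_i=0$ for every $i$. The restrictions are then read off directly: writing $\iota\colon Z\hookrightarrow M$ for the inclusion, pullback kills every factor $x^i$ with $i\ge1$, so $\iota^*\big(\sum_{i=0}^{m-1}\pi^*\hat\alpha_i\,x^i\big)=\hat\alpha_0$ is a closed $1$-form on $Z$, while $\iota^*\beta=\iota^*\beta_0$ satisfies $d(\iota^*\beta_0)=\iota^*(d_Z\beta_0)=0$ and is thus a closed $2$-form on $Z$.

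The last and most delicate point is well-definedness, which I would establish by comparing two Laurent decompositions coming from different tubular neighborhoods and defining functions. This is where I expect the main obstacle: under a change of defining function $x\mapsto ux$ the principal part rescales by powers of $u|_Z$ and picks up $du$-cross terms, one of which feeds back into the smooth part, so neither $\hat\alpha_0$ nor $\iota^*\beta$ is literally invariant for $m\ge2$. The resolution is to phrase well-definedness relative to the defining-function jet that is part of the $b^m$-structure, and then to verify that the remaining freedom—the choice of $\pi$ and of the Taylor remainder—only redistributes the higher-order residues, the $dx$-components, and $\beta$, leaving the two restrictions canonically determined. Carrying out this transformation bookkeeping, rather than the existence and closedness steps, is the real work.
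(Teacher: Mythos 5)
The paper itself offers no proof of this Proposition: it is imported verbatim from Scott \cite{S}. Measured against the standard argument there, your existence-and-closedness part is correct and is essentially the same route: split $\omega=\frac{dx}{x^m}\wedge\alpha+\gamma$ in a collar, Taylor-expand the $dx$-free part of $\alpha$ to order $m-1$ so the $O(x^m)$ remainder becomes smooth after wedging with $\frac{dx}{x^m}$, and then compare the singular part of $d\omega$ with the smooth part. Your coefficient-matching step (multiplying $\sum_i x^{-(m-i)}\pi^*(d\hat{\alpha}_i)=S(x)$ by $x^m$ and equating Taylor coefficients up to order $m-1$ with a function vanishing to order $m$) is a clean way to force $d\hat{\alpha}_i=0$, and the identifications $\iota^*\bigl(\sum_i x^i\pi^*\hat{\alpha}_i\bigr)=\hat{\alpha}_0$ and $d(\iota^*\beta)=d(\iota^*\beta_0)=0$ follow correctly.

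The gap is the well-definedness claim, which you diagnose but do not prove. Your observation that the residues are not invariant under an arbitrary change of defining function when $m\ge 2$ is accurate --- this is exactly why Scott's $b^m$-manifolds carry the jet datum $j_Z$ appearing in Theorem \ref{scott} --- but announcing that ``the transformation bookkeeping is the real work'' leaves a third of the Proposition unproved. The missing verification is short once the defining function (or its jet class) is fixed: if $\frac{dx}{x^m}\wedge A+\beta=\frac{dx}{x^m}\wedge A'+\beta'$ are two decompositions, then $\frac{dx}{x^m}\wedge(A-A')=\beta'-\beta$ is smooth, which forces the $dx$-free part of $A-A'$ to vanish to order $m$ in $x$; since $A$ and $A'$ are polynomials of degree at most $m-1$ in $x$ with coefficients pulled back from $Z$, this gives $\hat{\alpha}_i=\hat{\alpha}_i'$ for all $i$, and then $\beta'-\beta=dx\wedge(\text{smooth})$ pulls back to zero on $Z$, so $\iota^*\beta=\iota^*\beta'$. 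Without some version of this (or an explicit statement that well-definedness is only asserted modulo the jet data), the last clause of the Proposition is asserted rather than established.
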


\section{A Moser trick for transversally vanishing volume forms}

In order to apply the Moser's path method in this case, we need to prove a few auxiliary lemmas. Let $\Omega$ be a transversally vanishing volume form with critical set $\bar Z$. In what follows we will denote $Z$ any of the connected components of the critical set and denote by $t$ a defining function of it.

 Observe that given a top degree form $\mu$ on $U$,  a neighborhood of $Z$, the form $t\mu$ is a transversally vanishing volume form (in a possibly smaller neighborhood) having $Z$ as critical set if and only if $\mu$ is non-vanishing along $Z$.

 Let  $\Omega_0$ and $\Omega_1$ stand for two transversally vanishing volume forms  at $\bar Z$ which for simplicity will be denoted as \textbf{folded volume forms}. In what follows we assume that the orientation induced on each component of $\bar Z$ is the same for both forms.
\begin{lemma}\label{path}
For $0\leq s \leq 1$, the form
$$ \Omega_s= (1-s)\Omega_0 + s\Omega_1 $$
is a folded volume form having $Z$ as critical set.

\end{lemma}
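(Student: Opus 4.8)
The plan is to show that the convex combination $\Omega_s$ is a folded volume form with critical set exactly $Z$, for every $s\in[0,1]$. The key observation is the factorization recorded just before the lemma: a top-degree form near $Z$ is a folded volume form with critical set $Z$ precisely when, writing it as $t\mu$ for $t$ a defining function of $Z$, the form $\mu$ is non-vanishing along $Z$. So I would first write both given forms in this factored shape near a fixed component $Z$, say $\Omega_0 = t\mu_0$ and $\Omega_1 = t\mu_1$ with $\mu_0,\mu_1$ top-degree forms that are non-vanishing on $Z$. By linearity we then get
$$\Omega_s = (1-s)\,t\mu_0 + s\,t\mu_1 = t\bigl((1-s)\mu_0 + s\mu_1\bigr),$$
so everything reduces to checking that $\mu_s := (1-s)\mu_0 + s\mu_1$ is nowhere zero on $Z$, which would simultaneously give transversality of the zero along $Z$ and confirm that the zero set of $\Omega_s$ is precisely $Z$ (no new zeroes are introduced away from $Z$, at least in the neighborhood $U$, since $t$ vanishes only on $Z$).

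The heart of the argument is therefore the non-vanishing of $\mu_s$ on $Z$, and this is exactly where the orientation hypothesis enters. At a point $p\in Z$, both $\mu_0(p)$ and $\mu_1(p)$ are nonzero elements of the one-dimensional top exterior power $\bigwedge^{n}T_p^*M$. Two nonzero covectors in a line are proportional, $\mu_1(p) = \lambda(p)\,\mu_0(p)$ for some nonzero scalar $\lambda(p)$, and a convex combination $(1-s)\mu_0(p)+s\mu_1(p) = \bigl((1-s)+s\lambda(p)\bigr)\mu_0(p)$ vanishes for some $s\in[0,1]$ if and only if $\lambda(p)<0$, i.e. if and only if $\mu_0(p)$ and $\mu_1(p)$ point in opposite directions. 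The assumption that $\Omega_0$ and $\Omega_1$ induce the \emph{same} orientation on each component of the critical set is precisely what forces $\lambda(p)>0$, so that the coefficient $(1-s)+s\lambda(p)$ stays strictly positive for all $s\in[0,1]$. I would make this compatibility of signs precise by relating $\mu_i(p)$ to the induced orientation/co-orientation on $Z$ (for instance via contraction with $\partial/\partial t$, or by comparing with a fixed local volume form), and conclude $\mu_s(p)\neq 0$.

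Finally I would assemble the local statements into a global one. Away from the neighborhood $U$ of $Z$, both $\Omega_0$ and $\Omega_1$ are genuine (non-vanishing) volume forms inducing the same orientation on $M$, so their convex combination is again non-vanishing there — this is the classical observation that the set of volume forms compatible with a fixed orientation is convex. Near $Z$ the local computation above shows $\Omega_s = t\mu_s$ with $\mu_s$ non-vanishing on $Z$, hence by the pre-lemma factorization criterion $\Omega_s$ is a folded volume form with critical set $Z$. The two descriptions agree on the overlap, giving the claim for each connected component and therefore for $\bar Z$.

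I expect the main obstacle to be the orientation bookkeeping in the second paragraph: making rigorous the claim that ``same induced orientation on $Z$'' translates into $\lambda(p)>0$ for the proportionality constant of the transverse factors $\mu_i$. One must be careful that the relevant orientation is the one induced on the hypersurface $Z$ (a co-orientation, via the defining function $t$), and check that the sign of $\lambda$ is governed by this co-orientation rather than by some ambient choice; a clean way to handle this is to fix an auxiliary local volume form and express the non-vanishing condition as positivity of a single smooth function, reducing the whole statement to the elementary fact that a convex combination of two positive numbers is positive.
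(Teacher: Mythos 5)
Your proposal is correct and follows essentially the same route as the paper: factor $\Omega_i = t\mu_i$ near $Z$ using the observation preceding the lemma, note $\Omega_s = t\mu_s$ with $\mu_s = (1-s)\mu_0 + s\mu_1$, and use the matching-orientation hypothesis to conclude that $\mu_0$ and $\mu_1$ are both positive so their convex combination is non-vanishing on $Z$. The paper's proof is just a terser version of yours; your extra care with the sign of the proportionality constant and the region away from $Z$ fills in details the paper leaves implicit.
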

\begin{proof}
By the argument described above we may write $\Omega_0= t\mu_0$ and $\Omega_1=t\mu_1$ for $\mu_0$ and $\mu_1$ not vanishing at $Z$ and positive  (because of matching orientations). Consider the path $\mu_s= (1-s)\mu_0 + s\mu_1$ for $0\leq s \leq 1$. Observe that $\Omega_s=t\mu_s$ and thus $\mu_s$ does not vanish at $Z$.
\end{proof}

A consequence is that $\iota_v\Omega_s$ vanishes along $Z$, where $v$ is any non-vanishing section of $TM$ (or $TU$). By this lemma we deduce,
\begin{Claim}\label{vf}
Given $\alpha \in \Omega^{n-1}(U)$, there exists a vector field $u$ such that
$$ \iota_u\Omega_s= \alpha $$
if and only if $ \alpha|_Z=0$.
\end{Claim}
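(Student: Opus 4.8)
The plan is to prove the two implications separately. The implication $(\Rightarrow)$ is immediate from the factorization furnished by Lemma~\ref{path}: near $Z$ we may write $\Omega_s = t\mu_s$ with $\mu_s$ a top-degree form that does not vanish along $Z$. If $\iota_u\Omega_s=\alpha$ for some vector field $u$, then $\alpha = \iota_u(t\mu_s)=t\,\iota_u\mu_s$, and since $t$ vanishes on $Z$ this forces $\alpha|_Z=0$. This is precisely the observation recorded just before the claim, now applied to $u$ in place of a nowhere-vanishing section $v$.

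For $(\Leftarrow)$ the first step is to exploit that on $U\setminus Z$ the form $\Omega_s$ is a genuine volume form, so at each such point the contraction map $u\mapsto \iota_u\Omega_s$ is a fiberwise linear isomorphism from $T_pM$ onto $\wedge^{n-1}T_p^*M$. Consequently there is a unique smooth vector field $u$ on $U\setminus Z$ solving $\iota_u\Omega_s=\alpha$, and the whole content of the claim is to show that this $u$ extends smoothly across $Z$. Since the solution is already pinned down off $Z$, there is no freedom in the extension; it suffices to verify that no singularity develops along the degeneracy locus.

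The verification is a local computation. Around a point of $Z$ I would choose coordinates $(t,x_2,\dots,x_n)$ with $Z=\{t=0\}$ and write $\Omega_s = t\,f\,dt\wedge dx_2\wedge\cdots\wedge dx_n$ with $f$ nowhere zero (the positivity of $\mu_s$ from Lemma~\ref{path}). Expanding $u=\sum_i u^i\partial_{x_i}$ with $x_1=t$, and $\alpha$ in the basis $dx_1\wedge\cdots\wedge\widehat{dx_i}\wedge\cdots\wedge dx_n$ with coefficients $\alpha_i$, the equation $\iota_u\Omega_s=\alpha$ becomes, coefficient by coefficient, $u^i=(-1)^{i-1}\alpha_i/(t f)$. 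The only possible singularity is the factor $1/t$, so smoothness of $u$ is equivalent to each $\alpha_i$ being divisible by $t$. Here the hypothesis enters: $\alpha|_Z=0$ says that every coefficient $\alpha_i$ vanishes on $\{t=0\}$, so by Hadamard's lemma $\alpha_i=t\,g_i$ for smooth functions $g_i$, whence $u^i=(-1)^{i-1}g_i/f$ is smooth up to and across $Z$. Since a smooth extension of $u|_{U\setminus Z}$ is unique if it exists, these local extensions are automatically consistent and assemble into a global smooth vector field $u$ on $U$.

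The main obstacle is exactly this extension across the locus where $\Omega_s$ drops rank: the inverse contraction blows up like $1/t$, and the crucial point is that the full pointwise vanishing $\alpha|_Z=0$—and not merely the vanishing of the pullback $i^*\alpha$—is what forces divisibility of \emph{every} component $\alpha_i$ by $t$ and thereby cancels the pole. It is worth emphasizing in the writeup that $\alpha|_Z=0$ is to be read in this strong sense, consistent with the earlier statement that $\iota_v\Omega_s$ vanishes along $Z$, since the weaker relative condition $i^*\alpha=0$ alone would not suffice.
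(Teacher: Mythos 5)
Your proof is correct and follows essentially the same route the paper takes: necessity from the factorization $\Omega_s=t\mu_s$ of Lemma~\ref{path}, and sufficiency by inverting the contraction off $Z$ and cancelling the $1/t$ pole. The paper only sketches this (it states the claim as a consequence of the lemma and the uniqueness remark), so your explicit coordinate computation with Hadamard's lemma, and your emphasis that $\alpha|_Z=0$ must be read as full pointwise vanishing rather than $i^*\alpha=0$, simply supplies the details the paper leaves implicit.
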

Observe that since in $M \backslash \bar Z$ the form defines a volume, if the vector field exists it is unique.

Assume now that both the usual and relative cohomology class with respect to $Z$ of $\Omega_0$ and $\Omega_1$ coincide. Then there is $\beta$ such that $\Omega_0-\Omega_1=d\beta$. By definition we have that $i^*\beta=0$, where $i:Z \hookrightarrow M$ is the inclusion of $Z$ in $M$.

\begin{lemma}\label{wein}
We can assume that $\beta$ satisfies $\beta|_Z=0$.
\end{lemma}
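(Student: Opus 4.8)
The gap to close is between the pullback condition $i^*\beta=0$, which only constrains the tangential part of $\beta$ along $Z$, and the genuine restriction $\beta|_Z=0$, which in addition controls the normal ($dt$) part; the stronger condition is precisely what Claim~\ref{vf} requires in order to solve $\iota_u\Omega_s=\beta$ in the Moser step to come. The plan is to work in a tubular neighborhood $U\cong Z\times(-\epsilon,\epsilon)$ of $Z$ in which $t$ is the collar coordinate and $X=\partial/\partial t$ is the associated vector field. Using the Cartan-type identity $\beta=dt\wedge\iota_X\beta+\iota_X(dt\wedge\beta)$, I would split $\beta$ into its normal part $dt\wedge\iota_X\beta$ and its tangential part $\gamma:=\iota_X(dt\wedge\beta)$, the latter carrying no factor of $dt$. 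Since $i^*dt=0$, the hypothesis $i^*\beta=0$ forces $i^*\gamma=0$, and because $\gamma$ has no $dt$-component this is equivalent to $\gamma|_Z=0$. Hence the only obstruction to $\beta|_Z=0$ is the normal term $dt\wedge\iota_X\beta$ evaluated on $Z$.

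First I would kill this normal term by correcting $\beta$ with an exact form. Let $\rho$ be a bump function equal to $1$ on $Z$ and supported in $U$, and set $\eta=\rho\,t\,\iota_X\beta$ and $\beta'=\beta-d\eta$. Differentiating gives $d\eta=d\rho\wedge(t\,\iota_X\beta)+\rho\,dt\wedge\iota_X\beta+\rho\,t\,d(\iota_X\beta)$, and restricting to $Z$, where $t=0$ and $\rho=1$, the two terms carrying a factor $t$ drop out, leaving $d\eta|_Z=dt\wedge\iota_X\beta|_Z$. Therefore $\beta'|_Z=\beta|_Z-dt\wedge\iota_X\beta|_Z=\gamma|_Z=0$, which is exactly the desired conclusion.

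It then remains to verify that the correction alters neither cohomological datum. Since $d\beta'=d\beta-d(d\eta)=d\beta=\Omega_0-\Omega_1$, the new primitive still witnesses that the two forms are cohomologous. Moreover $i^*\eta=i^*(\rho\,t\,\iota_X\beta)=0$ because $t$ vanishes on $Z$, so $i^*\beta'=i^*\beta-d(i^*\eta)=0$ and the relative class with respect to $Z$ is preserved as well. The main point to be careful about is globalization: the decomposition and the vector field $X$ live only on the collar, so it is the cutoff $\rho$ that turns $\eta$ into a well-defined global form, and one must confirm that nothing is disturbed on $Z$ itself, which holds because $\rho\equiv 1$ there. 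Finally, should the critical set $\bar Z$ have several connected components, I would run the same construction on disjoint tubular neighborhoods, one per component, and add the resulting corrections.
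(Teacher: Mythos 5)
Your proof is correct, but it takes a genuinely different route from the paper's. The paper invokes the relative Poincar\'e lemma twice: first in its strong form, applied to the closed form $\Omega_0-\Omega_1$ (which vanishes on $Z$, not merely pulls back to zero), to produce a local primitive $\lambda$ with $\lambda|_Z=0$; and then in its weak form, applied to the closed form $\beta-\lambda$ with $i^*(\beta-\lambda)=0$, to write $\beta-\lambda=d\alpha$; the corrected primitive is then $\beta-d(\varphi\alpha)$ for a cutoff $\varphi$. You instead work entirely by hand in the collar: the splitting $\beta=dt\wedge\iota_X\beta+\iota_X(dt\wedge\beta)$ isolates the normal component, which is the only obstruction to $\beta|_Z=0$ once $i^*\beta=0$, and the explicit primitive $\eta=\rho\,t\,\iota_X\beta$ kills it. Your computation of $d\eta|_Z$ is right (the $d\rho$-term and the $\rho\,t\,d(\iota_X\beta)$-term both carry a factor of $t$ and so die on $Z$), and the identification of $i^*\gamma=0$ with $\gamma|_Z=0$ for a form without $dt$-component is also correct, since the remaining coordinate covectors stay linearly independent on $T_zM$. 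What your argument buys is that it is self-contained and slightly more general: it never uses that $d\beta=\Omega_0-\Omega_1$ vanishes on $Z$, only the hypothesis $i^*\beta=0$, so it shows that any representative of a relative cocycle can be normalized to vanish on $Z$ in the strong sense. What the paper's route buys is brevity, given the relative Poincar\'e lemma as a citable black box, and it avoids choosing a collar and a normal vector field. Both arguments modify $\beta$ by an exact form supported near $Z$, so the identity $d\beta=\Omega_0-\Omega_1$ and the relative condition are preserved in either case, and your remark about treating the connected components of $\bar Z$ on disjoint collars is the right way to globalize.
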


\begin{proof}
For this we need to recall the relative Poincar\'e lemma for which we follow \cite{W}.
\begin{theorem}[Relative Poincar\'e lemma]
Let $N \subset M$ be a closed submanifold of $M$, and $\omega$ a closed $k$-form of $M$ whose pullback to $N$ is zero. Then there is a $(p-1)$-form $\lambda$ on a neighborhood of $N$ such that $d\lambda=\omega$ and $\lambda$ satisfies $i^*\lambda=0$. If $\omega$ satisfies $\omega|_N=0$ then $\lambda$ can be chosen such that $\lambda|_N=0$.
\end{theorem}
Since the relative cohomology vanishes, we have $\beta$ such that $i^*\beta=0$. In a neighborhood $U(Z)$ of $Z$, we can apply the relative Poincar\'e lemma and there exist a $1$-form $\lambda$ in this neighborhood such that $\Omega_0-\Omega_1= d\lambda$ and $\lambda|_Z=0$. In this neighborhood $d\beta = d\lambda$ and $i^*(\beta-\lambda)=0$ so  the relative Poincar\'e lemma yields the existence of a form $\alpha$ such that $\beta-\lambda= d\alpha$. Observe that in $Z$ we have $d\alpha|_Z= \beta|_Z$.

Let $\varphi$ be a bump function of a possibly smaller neighborhood of $Z$ and consider $\varphi\alpha$ a global extension of $\alpha$ to $M$. Then the form $ \gamma= \beta - d(\varphi \alpha)$ satisfies $\gamma|_Z=0$ and $\Omega_0-\Omega_1= d\gamma$. This completes the proof of the lemma.

\end{proof}

We can improve this statement by having a more explicit expression for $\beta$. This will give some information about the isomorphism that we obtain via Moser's trick.

\begin{lemma}\label{rela}
 The form $\beta$ can be  written as $\beta|_U= t^2 \alpha$ in a neighborhood of each connected component of $\bar Z$.

\end{lemma}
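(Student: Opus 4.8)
The plan is to argue in a tubular neighbourhood $U\cong Z\times(-\varepsilon,\varepsilon)$ of a fixed connected component $Z$ of $\bar Z$, with $t$ the collar coordinate so that $Z=\{t=0\}$. Writing $d=dt\wedge\partial_t + d_Z$ for the splitting of the exterior derivative into its normal and tangential parts, I would decompose
$$\beta = dt\wedge\gamma + \eta,$$
where $\gamma$ and $\eta$ are horizontal (no $dt$), possibly $t$-dependent, with $\eta$ a top horizontal form $\eta = a\,dx_2\wedge\cdots\wedge dx_n$. By Lemma~\ref{wein} we have $\beta|_Z=0$, so $\gamma$ and $\eta$ both vanish at $t=0$; by Hadamard's lemma each is divisible by $t$, and in particular $a=t a_1$ and $\gamma=t\gamma_1$.

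Next I would exploit the equation $d\beta=\Omega_0-\Omega_1=t(\mu_0-\mu_1)$, whose right-hand side vanishes on $Z$. A direct computation gives $d\beta = dt\wedge(\partial_t\eta - d_Z\gamma)$, and comparing with $t(\mu_0-\mu_1)$ (a multiple of $t\,dt\wedge dx_2\wedge\cdots\wedge dx_n$) yields, after cancelling the injective factor $dt\wedge$, an identity of horizontal top forms $\partial_t\eta - d_Z\gamma = t(\cdots)$. Restricting to $t=0$ and using $a=ta_1$, $\gamma=t\gamma_1$ forces $a_1|_{t=0}=0$, hence $a=t^2 a_2$: the horizontal part $\eta$ is already divisible by $t^2$. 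Equivalently, writing $\beta=t\tilde\beta$, the relation $dt\wedge\tilde\beta|_Z=0$ shows $\tilde\beta|_Z=dt\wedge\rho$ for some horizontal $(n-2)$-form $\rho$ on $Z$.

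The one remaining term is the normal component $dt\wedge\rho$, which only vanishes to first order; removing it is the crux of the argument. Here I would use the freedom to replace $\beta$ by a cohomologous primitive: let $\hat\rho$ be the $t$-independent extension of $\rho$ to $U$, let $\varphi$ be a bump function equal to $1$ near $Z$ and supported in $U$, and set
$$\beta' = \beta - d\!\left(\varphi\,\tfrac{t^2}{2}\,\hat\rho\right).$$
Then $d\beta'=d\beta=\Omega_0-\Omega_1$, and near $Z$ (where $\varphi\equiv 1$) one computes $\beta - t\,dt\wedge\hat\rho = t(\tilde\beta - dt\wedge\hat\rho)=t^2\kappa$, since $\tilde\beta-dt\wedge\hat\rho$ vanishes at $t=0$, while the leftover $-\tfrac{t^2}{2}d\hat\rho$ is manifestly of order $t^2$. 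Hence $\beta'=t^2\alpha$ near $Z$ for a smooth $(n-1)$-form $\alpha$, as claimed. Finally I would check that this modification is harmless: $\beta'|_Z=0$ and $i^*\beta'=0$ are immediate from $\beta'=t^2\alpha$, while the adjustment is exact and supported in $U$, so it disturbs neither $d\beta'=\Omega_0-\Omega_1$ nor the hypotheses away from $Z$.

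I expect the main obstacle to be precisely this normal component $dt\wedge\rho$: the pointwise conditions $\beta|_Z=0$ and $d\beta|_Z=0$ control the tangential top-horizontal coefficient well enough to give vanishing to second order, but they pin down the $dt$-part only to first order, so the passage from $t$ to $t^2$ genuinely requires adjusting the primitive by an exact form and verifying, via naturality of the pullback ($i^*d=di^*$), that the relative condition $i^*\beta=0$ survives.
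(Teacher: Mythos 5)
Your proposal is correct, and it is in fact more careful than the paper's own argument. The paper proceeds exactly as in your first two paragraphs: from Lemma~\ref{wein} it writes $\beta = t\alpha$, computes $d\beta = dt\wedge\alpha + t\,d\alpha = t\mu$, and then concludes directly that ``$\alpha$ needs to vanish at least linearly at $Z$,'' hence $\beta = t^2\alpha'$. But that last inference is exactly the point you flag as the crux: the identity $dt\wedge\alpha = t(\mu - d\alpha)$ only forces the component of $\alpha$ transverse to $dt$ (your horizontal top part $\eta$) to vanish on $Z$; the component of the form $dt\wedge\gamma$ is annihilated by wedging with $dt$ and is not constrained at all. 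Indeed $\beta = t\,dt\wedge\gamma$ with $\gamma$ nonvanishing on $Z$ satisfies $\beta|_Z = 0$ and has $d\beta$ divisible by $t$, yet vanishes only to first order; so the lemma cannot hold for an arbitrary primitive satisfying the hypotheses, and the word ``can be written'' must be earned by a further modification of $\beta$. Your correction term $\beta' = \beta - d\bigl(\varphi\,\tfrac{t^2}{2}\,\hat\rho\bigr)$ supplies precisely that: it is exact and supported near $Z$, so it preserves $d\beta = \Omega_0 - \Omega_1$ and the relative condition, and it removes the offending normal component $t\,dt\wedge\rho$ so that $\beta'$ is genuinely $O(t^2)$. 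This is the step the paper omits, and it is needed for the use made of the lemma in Theorem~\ref{main} (second-order vanishing of $\beta$ is what makes the Moser vector field $v_s$ vanish on $\bar Z$ and the resulting diffeomorphism restrict to the identity there). The only cosmetic remark is that your displayed computation of $\beta'$ near $Z$ momentarily drops the $-\tfrac{t^2}{2}d\hat\rho$ term before reinstating it; as you note, that term is already of order $t^2$, so nothing is lost.
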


\begin{proof}

The fact the the relative cohomology of $\Omega_0-\Omega_1$ is zero means that we can assume that $\beta$ vanishes at $TM|_z$ for every point $z\in Z$ because of the previous lemma. In particular in a possibly smaller neighborhood $U$ it is of the form $\beta= t\alpha$ for an $\alpha \in \Omega^{n-1}(U)$. Observe that $d\beta= dt\wedge \alpha - td\alpha$ but also $d\beta= \Omega_0-\Omega_1= t\mu$. Thus $\alpha$ needs to vanish at least linearly at $Z$; in particular $\beta$ vanishes at least at order $2$ in $t$.

\end{proof}

We can now state and prove a version of Moser's theorem for transversally vanishing volume forms.

\begin{theorem}\label{main}
Let $\Omega_0$ and $\Omega_1$ be two folded volume forms with critical set $\bar Z=Z_1 \cup ... \cup Z_n$. Assume that  the cohomology classes of $\Omega_0$ and $\Omega_1$ coincide in both De Rham cohomology and relative cohomology (i.e., $[\Omega_0]= [\Omega_1]$ and $[\Omega_0]_r=[\Omega_1]_r$), then there exist a diffeomorphism $\varphi$ such that $\varphi^*\Omega_1=\Omega_0$ that restricts to the identity along  $\bar Z$.

\end{theorem}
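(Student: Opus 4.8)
The plan is to run Moser's path method along the linear interpolation provided by Lemma~\ref{path}. Set $\Omega_s = (1-s)\Omega_0 + s\Omega_1$, which for every $s \in [0,1]$ is a folded volume form with the same critical set $\bar Z$. The equality of De Rham and relative cohomology classes yields a primitive $\beta$ with $\Omega_0 - \Omega_1 = d\beta$ and $i^*\beta = 0$; applying Lemmas~\ref{wein} and~\ref{rela} I may moreover assume $\beta|_Z = 0$ globally, and $\beta = t^2\alpha$ on a neighborhood $U$ of each connected component of $\bar Z$.

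I would then look for a time-dependent vector field $v_s$ whose flow $\varphi_s$, with $\varphi_0 = \mathrm{id}$, satisfies $\varphi_s^*\Omega_s = \Omega_0$. Differentiating in $s$ gives $\varphi_s^*\bigl(\mathcal{L}_{v_s}\Omega_s + \dot\Omega_s\bigr) = 0$, and since $\Omega_s$ is top-degree we have $d\Omega_s = 0$, so $\mathcal{L}_{v_s}\Omega_s = d\,\iota_{v_s}\Omega_s$. As $\dot\Omega_s = -d\beta$, the condition reduces to the homological equation
$$ \iota_{v_s}\Omega_s = \beta. $$
Because $\beta|_Z = 0$, Claim~\ref{vf} guarantees a solution $v_s$ for each $s$; away from $\bar Z$ the form $\Omega_s$ is a genuine volume form, so there $v_s$ is unique and smooth.

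The heart of the argument, and the step I expect to be the main obstacle, is controlling $v_s$ along the critical set, where $\Omega_s$ degenerates and a naive solution could blow up. Near a component of $\bar Z$ write $\Omega_s = t\mu_s$ with $\mu_s$ non-vanishing, so that contraction with $\mu_s$ is a fibrewise isomorphism from vector fields to $(n-1)$-forms. Using $\beta = t^2\alpha$ I would \emph{define} $v_s$ on $U$ by $\iota_{v_s}\mu_s = t\alpha$: the right-hand side is smooth and $\mu_s$ is non-vanishing, so $v_s$ is a genuinely smooth vector field, and it vanishes on $Z$ since $t\alpha|_Z = 0$. Contracting instead against $\Omega_s = t\mu_s$ gives $\iota_{v_s}\Omega_s = t^2\alpha = \beta$, so $v_s$ solves the homological equation on $U$. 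On the overlap $U \setminus \bar Z$ it agrees with the unique solution found on $M \setminus \bar Z$, so these local solutions glue to a single smooth $v_s$ on $M$ vanishing identically along $\bar Z$.

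Finally I would integrate $v_s$ over $s \in [0,1]$, the flow existing for all $s$ by compactness of $M$, and set $\varphi := \varphi_1$, which satisfies $\varphi^*\Omega_1 = \Omega_0$ by construction. Since $v_s|_{\bar Z} = 0$ for every $s$, each point of $\bar Z$ is fixed by the flow, so $\varphi$ restricts to the identity along $\bar Z$, completing the proof.
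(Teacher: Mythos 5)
Your proposal is correct and follows essentially the same route as the paper: the linear path $\Omega_s$ from Lemma~\ref{path}, the primitive $\beta = t^2\alpha$ from Lemmas~\ref{wein} and~\ref{rela}, and the Moser equation $\iota_{v_s}\Omega_s = \beta$ whose solution vanishes along $\bar Z$. Your explicit construction of $v_s$ near $Z$ via $\iota_{v_s}\mu_s = t\alpha$ is a welcome fleshing-out of the smoothness assertion that the paper delegates to Claim~\ref{vf}, but it is not a different argument.
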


\begin{proof}

Since the De Rham cohomology class of $\Omega_0$ is the same as $\Omega_1$, the following equality holds $\Omega_0-\Omega_1=d\beta$.

Let $Z$  be one of the connected components of $\bar Z$ and  let $v$ be an oriented non-vanishing section of $TM$. Denoting by $U=U(Z)$, a neighborhood of $Z$, we may write $\Omega_i|_U=t\mu_i$ with $\mu_i$ is a non-vanishing form and $t$ a defining function of $Z$, for $i=1,2$.

Consider now  the path  $\Omega_s= (1-s) \Omega_0 + s \Omega_1$ for $s\in [0,1]$. By  Lemma \ref{path}, $\Omega_s$ is vanishing transversally at the same critical set thus $\Omega_s|_Z=0$. Because the relative cohomology class at $Z$ of the two forms is the same, in a possibly smaller neighborhood we may apply Lemmas \ref{wein} and \ref{rela} and around $Z$ the form is written as  $\beta = t^2 \alpha$ with $t$ a defining function of $Z$.
The same applies for any of the connected components in $\bar Z$.  In order to apply Moser's trick we need to solve the equation
$$ \mathcal{L}_{v_s}\Omega_s + \frac{d\Omega_s}{ds}=0, $$
\noindent which may  be written as $d\iota_{v_s}\Omega_s= \Omega_0-\Omega_1= d\beta$. This is equivalent to finding a vector field $v_s$ satisfying
$$ \iota_{v_s}\Omega_s= \beta. $$
Because Lemma \ref{vf} applies for any curve in $\bar Z$, there exist a unique solution to the equation. Now since $\beta$ vanishes to second order, $v_s$ vanishes to the first order in all the components of the critical set. The flow $\varphi_s$ of $v_s$ satisfies $\varphi_s^*\Omega_s=\Omega_0$, hence $\varphi_1$ is the desired diffeomorphism.  Observe that this  diffeomorphism restricts to identity in the critical set.
\end{proof}

The theorem also applies if the critical sets of $\Omega_0$ and $\Omega_1$ are diffeomorphically equivalent by an orientation-preserving diffeomorphism. The fact that the relative cohomology is invariant for equivalent  folded volume forms needs an extra assumption in the general setting.

\begin{theorem} Let $\varphi$ be a diffeomorphism in the arc-connected component of the identity in $\operatorname{Diff}(M, Z)$  and $\Omega_0$ and $\Omega_1$ two folded volume forms such that $\varphi^*\Omega_1=\Omega_0$ then the cohomology classes determined by  $\Omega_0$ and $\Omega_1$ are the same in De Rham cohomology and in relative cohomology  (i.e., $[\Omega_0]= [\Omega_1]$ and $[\Omega_0]_r=[\Omega_1]_r$).
\end{theorem}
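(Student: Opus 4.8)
The plan is to upgrade the single diffeomorphism $\varphi$ into a full isotopy, which is exactly what the identity-component hypothesis provides. Concretely, I would pick a smooth path $\varphi_t$, $t\in[0,1]$, inside $\operatorname{Diff}(M,Z)$ with $\varphi_0=\mathrm{id}$ and $\varphi_1=\varphi$, and let $X_t$ be the time-dependent vector field generating it, defined by $\frac{d}{dt}\varphi_t = X_t\circ\varphi_t$. Since every $\varphi_t$ preserves the critical set $Z$, its infinitesimal generator $X_t$ is tangent to $Z$ at each point of $Z$, and moreover $\varphi_t$ restricts to a diffeomorphism $\psi_t:=\varphi_t|_Z$ of $Z$ satisfying $\varphi_t\circ i = i\circ\psi_t$. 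This last compatibility is the structural fact that the extra assumption buys us.

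For the De Rham part I would run the usual homotopy computation. Differentiating $\varphi_t^*\Omega_1$ in $t$ and integrating gives
$$ \Omega_0-\Omega_1 = \varphi^*\Omega_1-\Omega_1 = \int_0^1 \varphi_t^*\,\mathcal{L}_{X_t}\Omega_1\,dt. $$
Because $\Omega_1$ is a top-degree form we have $d\Omega_1=0$, so Cartan's formula collapses the integrand to $\varphi_t^*\,d\iota_{X_t}\Omega_1 = d\big(\varphi_t^*\iota_{X_t}\Omega_1\big)$. Commuting $d$ with the integral then yields $\Omega_0-\Omega_1 = d\beta$ with
$$ \beta = \int_0^1 \varphi_t^*\,\iota_{X_t}\Omega_1\,dt, $$
which immediately establishes $[\Omega_0]=[\Omega_1]$.

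The relative statement amounts to verifying that this same primitive satisfies $i^*\beta=0$. Here I would combine two observations. First, since $\Omega_1$ is a folded volume form we may write $\Omega_1=t\mu_1$ near each connected component of the critical set, so that $\iota_{X_t}\Omega_1 = t\,\iota_{X_t}\mu_1$ vanishes identically along $Z$; in particular $i^*(\iota_{X_t}\Omega_1)=0$. Second, using $\varphi_t\circ i = i\circ\psi_t$ we get
$$ i^*\big(\varphi_t^*\iota_{X_t}\Omega_1\big) = \psi_t^*\,i^*(\iota_{X_t}\Omega_1) = 0. $$
Integrating in $t$ gives $i^*\beta=0$, whence $[\Omega_0]_r=[\Omega_1]_r$.

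The essential point, and the reason the connectedness hypothesis cannot be dropped, is that the whole argument rests on having the isotopy stay inside $\operatorname{Diff}(M,Z)$: an arbitrary $\varphi\in\operatorname{Diff}(M,Z)$ might act nontrivially on the top cohomology, destroying the exactness of $\varphi^*\Omega_1-\Omega_1$, while an isotopy that only passes through maps failing to preserve $Z$ would break the identity $\varphi_t\circ i = i\circ\psi_t$ on which the relative vanishing depends. I expect the genuine content of the proof to lie precisely in this relative step; the remaining items — differentiating under the integral sign and commuting $d$ with the $t$-integration over the compact interval $[0,1]$ — are routine for a smooth family of forms and need only a standard justification.
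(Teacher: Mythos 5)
Your proof is correct and follows essentially the same route as the paper: construct an isotopy $\varphi_t$ in $\operatorname{Diff}(M,Z)$, apply the de Rham homotopy operator to produce a primitive $\beta=\int_0^1\varphi_t^*(\iota_{X_t}\Omega_1)\,dt$, and observe that since the integrand vanishes along $Z$ and each $\varphi_t$ preserves $Z$, the primitive pulls back to zero on $Z$. The only (cosmetic) difference is that the paper feeds the difference $\Omega_1-\Omega_0$ into the homotopy operator while you apply it directly to $\Omega_1$; your version is, if anything, the cleaner instance of the standard formula $\varphi^*\omega-\omega=dQ\omega$.
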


\begin{proof}

Since $\varphi$ belongs to the arc-connected component of the
identity, we can indeed construct an homotopy $\varphi_t$ leaving $Z$ invariant such that
$\varphi_1=\varphi$ and $\varphi_0=id$. Denote $\Omega= \Omega_1-\Omega_0$.

 We can use this homotopy to
define a de Rham homotopy operator:

$$Q\Omega=\int_0^1\varphi_t^*(\iota_{v_t}\Omega) dt$$

\noindent where $v_t$ is the t-dependent vector field defined by the
isotopy $\varphi_t$.

Using this formula, we can prove (see for instance pages 110 and 111
in \cite{GS})
that $[\Omega_1]=[\Omega_0]$ as we can write $\Omega_1=\Omega_0+d\alpha$
 for the $1$-form  $\alpha=Q\Omega$.
From the formula above we can check that the relative cohomology class is also the same. Since $\Omega$ vanishes at $Z$, we deduce that $Q\Omega$ also vanishes at $Z$ and in particular its pullback to $Z$ is zero.

\end{proof}

\section{ Compatibiity of the classification of $b^m$-structures and the desingularization transformation}

\subsection{Desingularizing $b^m$-forms}

In \cite{gmw1} the desingularization of $b^{m}$-forms was introduced, leading to a radical new approach to the study of obstruction theory for the existence  of $b^m$-symplectic structure on a prescribed manifold.

 We will now detail how this desingularization can be applied to any $b^m$-form of any degree. This idea was already applied to $1$-forms for the study of singular contact structures in \cite{mo}.

Let $(M,Z)$ be a $b^m$-manifold and $\omega$ a $b^m$-form of degree $p$. Denote  by $t$ a defining function for $Z$. Following section $3.2$ in \cite{S}, $\omega$ can be written in a neighborhood $U$ of $Z$ as,
$$ \omega = \frac{dt}{t^m}\wedge \alpha + \beta,  $$
for $\alpha \in \Omega^{p-1}(U)$ and $\beta \in \Omega^p(U)$ where $U$ is an $\epsilon$-neighborhood of $Z$.  This decomposition is not unique as observed already in \cite{GMP2} and \cite{S}.

 In what follows , we consider as fixed the decomposition. As in \cite{gmw1} two different cases have to be considered depending on the parity of $m$.

\vspace{5mm}
\textbf{Case I: even $m$ .}

\vspace{5mm}

Assume $m=2k$ and let $f\in \mathcal{C}^{\infty}(\mathbb R)$ be an odd smooth function satisfying $f'(x)>0$ for all $x \in [-1,1]$ as shown below,

\begin{figure}[h!]
\centering
\includegraphics[scale=.6]{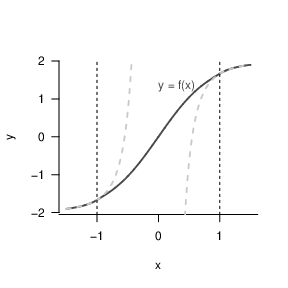}
\label{fig:digraph}
\end{figure}

 and satisfying

\[f(x)=\begin{cases} \frac{-1}{(2k-1) x^{2k-1}}-2 &\textrm{for} \quad x<-1  \\ \frac{-1}{(2k-1) x^{2k-1}}+2 &\textrm{for} \quad x>1
\end{cases}\]

\noindent outside the interval $[-1,1]$.

Scaling the function consider the function
 \begin{equation*}\label{definingequation}f_\epsilon(x):=  \frac{1}{\epsilon^{2k-1}}
f \left(\frac{x}{\epsilon}\right).
\end{equation*}
And outside the interval,
\[f_\epsilon(x)=\begin{cases} \frac{-1}{(2k-1) x^{2k-1}}-\frac{2}{\epsilon^{2k-1}} &\textrm{for} \quad x<-\epsilon  \\ \frac{-1}{
(2k-1)x^{2k-1}}+\frac{2}{\epsilon^{2k-1}}&\textrm{for} \quad x>\epsilon
\end{cases}\]
 Replacing $\frac{dx}{x^{2k}}$ by $df_\epsilon$  in the semi-local expression on $U$ and obtain
 $$  \omega_\epsilon = df_\epsilon \wedge \alpha + \beta. $$
We call it a $f_\epsilon$-desingularization of $\omega$.

\vspace{5mm}
\textbf{Case II: odd $m$.}
\vspace{5mm}

Consider $m=2k+1$, and consider a function $f\in C^\infty(\mathbb{R})$ satisfying
\begin{itemize}

\item $f(x)=f(-x)$
\item  $f'(x)>0$ if $x>0$
\item $f(x)=x^2-2$ if $x\in [-1,1]$
\item $f(x)=\log(\vert x \vert)$ if $k=0$, $x\in \mathbb R\setminus[-2,2]$
\item $f(x)=-\frac{1}{(2k+2)x^{2k+2}}$ if $k>0$, $x\in \mathbb R\setminus[-2,2]$.
\end{itemize}
\begin{center}
\includegraphics[scale=0.55]{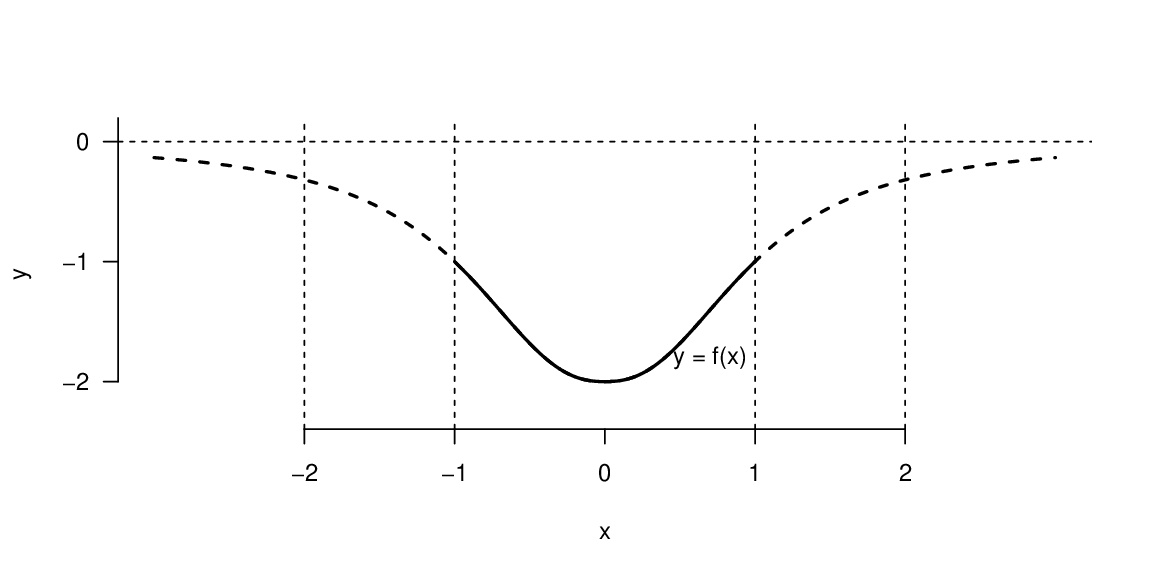}
\end{center}
Taking $\epsilon$  the width of a tubular neighborhood of $Z$ define
\begin{equation*}
f_\epsilon(x):=  \frac{1}{\epsilon^{2k}} f\left(\frac{x}{\epsilon}\right)
\end{equation*}
and consider the form

$$ \omega_\epsilon = df_\epsilon \wedge \alpha + \beta. $$
Observe that the $f_\epsilon$-desingularization is again smooth and  $df_\epsilon$ vanishes transversally at $Z$.

\begin{Remark}
When $\omega$ is closed, its Laurent decomposition can be used as in \cite{gmw1} to prove that $\omega_\epsilon$ is also closed.

\end{Remark}

\subsection{Compatibility of the different classification schemes}

The aim of this section is to relate the classification of $b^{2k+1}$-symplectic surfaces and the theorem proved in section 3, using the desingularization formulas described above. Recall that $b$-symplectic structures were classified by O.Radko in \cite{radko}. In \cite{radko} Radko uses  the notion of diffeomorphism class of curves and uses cohomology and together with the modular period to classify \emph{stable Poisson structures on surfaces}. Later on Scott classifies $b^m$-structures in surfaces (see theorem 6.7 in \cite{S}).

\begin{theorem}[Scott, Classification of $b^m$-surfaces]\label{scott}
Let $\omega_0, \omega_1$ be two symplectic $b^m$-forms on a compact connected $b^m$-surface $(M,Z,j_Z)$. The following are equivalent
\begin{enumerate}
\item The forms $\omega_0, \omega_1$ are $b^m$-symplectomorphic.
\item Their $b^m$-cohomology class is equal $[\omega_0] = [\omega_1]$.
\item The Liouville volumes of $\omega_0$ and $\omega_1$ agree, as do the numbers
 $$  \int_{\gamma_r} \alpha_{-i}, $$
 for all connected components $\gamma_r \subset Z$ and all $1\leq i \leq k$, where $\alpha_{-i}$ are the terms appearing in the Laurent decomposition of the two forms.
\end{enumerate}
\end{theorem}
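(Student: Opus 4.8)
The plan is to prove the cycle of implications $(1)\Rightarrow(2)\Rightarrow(3)\Rightarrow(1)$, combining a $b^m$-analogue of the Mazzeo--Melrose decomposition of cohomology with a Moser path argument carried out inside the $b^m$-category. The two cohomological conditions $(2)$ and $(3)$ are really two readings of the same data: $(2)$ is intrinsic to $b^m$-de Rham cohomology, whereas $(3)$ records that class through period integrals along $Z$ and the regularized (Liouville) volume on $M$.

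First I would establish $(2)\Leftrightarrow(3)$. Using the Laurent decomposition of the Proposition above, a closed $b^m$-two-form $\omega$ determines near each component $\gamma_r$ of $Z$ the closed $1$-forms appearing as the polar coefficients $\alpha_{-i}$ together with the restriction of the smooth part $\beta$. A $b^m$-Mazzeo--Melrose isomorphism of the shape
$$ H^2_{b^m}(M)\cong H^2(M)\oplus \bigoplus_{i}H^1(Z) $$
then splits the $b^m$-class of $\omega$ into a smooth part lying in $H^2(M)$ and a family of classes in $H^1(Z)$. On a compact surface $Z$ is a disjoint union of circles, so $H^1(Z)\cong\bigoplus_r\R$, with the factors detected by the periods $\int_{\gamma_r}$, while $H^2(M)\cong\R$ is detected by integration over $M$. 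Matching these two projections shows that equality of $b^m$-classes is equivalent to the simultaneous equality of the periods $\int_{\gamma_r}\alpha_{-i}$ and of the Liouville volume, which is precisely $(2)\Leftrightarrow(3)$.

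Next I would prove $(2)\Rightarrow(1)$ by a $b^m$-Moser argument closely parallel to the proof of Theorem \ref{main}. From $[\omega_0]=[\omega_1]$ in $H^2_{b^m}(M)$ I write $\omega_1-\omega_0=d\beta$ for a $b^m$-one-form $\beta$ and set $\omega_s=(1-s)\omega_0+s\omega_1$. In dimension two a $b^m$-symplectic form is exactly a nonvanishing section of $\Lambda^2(\,^{b^m}T^*M)$; since $M$ is connected each $\omega_i$ has a constant sign relative to a fixed $b^m$-volume form, and the common orientation, equivalently the equal sign of the Liouville volumes together with the fixed coorientation $j_Z$, guarantees that the convex combination $\omega_s$ stays nondegenerate for every $s$. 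Nondegeneracy then lets me solve $\iota_{v_s}\omega_s=-\beta$ for a unique time-dependent $b^m$-vector field $v_s$, which is automatically tangent to $Z$. On the compact manifold $M$ its flow $\varphi_s$ is defined for all $s\in[0,1]$, stays in $\operatorname{Diff}(M,Z)$, and satisfies $\varphi_s^*\omega_s=\omega_0$, so $\varphi_1$ is the sought $b^m$-symplectomorphism.

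Finally $(1)\Rightarrow(2)$ is formal: a $b^m$-symplectomorphism is a $b^m$-diffeomorphism with $\varphi^*\omega_1=\omega_0$, hence induces an isomorphism on $b^m$-cohomology and gives $[\omega_0]=\varphi^*[\omega_1]$; since $\varphi$ preserves $Z$ with its coorientation and $M$ with its orientation, it fixes every period and the Liouville volume, and by $(2)\Leftrightarrow(3)$ the classes agree. I expect the main obstacle to be the $b^m$-Moser step: one must verify that the interpolating path remains a genuine nonvanishing $b^m$-volume form near $Z$ and, crucially, that the solving vector field $v_s$ is an honest $b^m$-vector field whose flow never leaves the $b^m$-category, i.e. stays tangent to $Z$ for all time. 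This is exactly where the $b^m$-Darboux and Laurent normal forms near $Z$, together with the careful control of the primitive $\beta$ along $Z$ in the spirit of Lemma \ref{rela}, carry the weight of the argument.
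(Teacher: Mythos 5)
The paper itself does not prove this theorem: it is imported verbatim from Scott (Theorem 6.7 of \cite{S}), so strictly speaking there is no internal proof to compare with. That said, your reconstruction follows the same architecture as Scott's argument: the $b^m$-Mazzeo--Melrose isomorphism ${}^{b^m}H^2(M)\cong H^2(M)\oplus(H^1(Z))^m$ to translate between $(2)$ and $(3)$ (with $H^2(M)$ read off by the regularized Liouville volume and each $H^1(\gamma_r)\cong\mathbb{R}$ by the period of the corresponding polar coefficient), a Moser path argument carried out inside the $b^m$-category for $(2)\Rightarrow(1)$, and naturality of $b^m$-cohomology for $(1)\Rightarrow(2)$. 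The strategy is the right one and is the one used in the cited source.

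One step does not hold up as written: your justification that the path $\omega_s=(1-s)\omega_0+s\omega_1$ stays nondegenerate. You appeal to ``the equal sign of the Liouville volumes,'' but the Liouville volume of a $b^m$-symplectic surface is a regularized (principal value) quantity and can vanish --- for instance for a $b$-symplectic form on $T^2$ that is antisymmetric across $Z$ --- so its sign cannot certify that $\omega_0$ and $\omega_1$ are positive multiples of one another as sections of the line bundle $\Lambda^2({}^{b^m}T^*M)$. The clean argument is local at $Z$: nondegeneracy of $\omega_j$ along a component $\gamma_r\subset Z$ is exactly the statement that the most singular Laurent coefficient $\hat\alpha_0^{(j)}$ (the one-form multiplying $dx/x^m$) is nowhere vanishing on the circle $\gamma_r$, hence of constant sign and with nonzero period; equality of the $b^m$-classes forces these periods to coincide, hence the two coefficients have the same sign, hence $\omega_0$ and $\omega_1$ have the same sign near $Z$ and therefore on all of the connected surface $M$. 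With that repaired, convexity gives nondegeneracy of $\omega_s$ everywhere, the equation $\iota_{v_s}\omega_s=-\beta$ has a unique solution which is automatically a $b^m$-vector field (so its flow preserves $Z$ together with the jet data $j_Z$ and never leaves the $b^m$-category), and the rest of your outline goes through.
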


We can also consider top degree volume forms in $b^m$-manifolds as studied in \cite{mp1}, \cite{mp2} and introduced in \cite{N}. These forms, called $b^m$-Nambu forms, satisfy also that if two of them have the same $b^m$-cohomology then they are isomorphic.

\begin{theorem}
Let $\Theta_0$ and $\Theta_1$ be two $b^m$-Nambu structures of degree $n$ on a compact orientable manifold $M^n$. If $[\Theta_0]=[\Theta_1]$ in $b^m$-cohomology then there exists a diffeomorphism $\varphi$ such that $\varphi^*\Theta_1=\Theta_0$.
\end{theorem}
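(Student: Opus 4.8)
The plan is to reduce the statement about $b^m$-Nambu forms to the Moser theorem for folded volume forms (Theorem~\ref{main}) via the desingularization procedure of the previous subsection, exactly in the spirit of how Scott's $b^m$-symplectic classification is compared with the folded-symplectic picture. First I would record that a $b^m$-Nambu form of top degree $n$ is a $b^m$-form $\Theta$ in $\,^b\Omega^n(M)$ that is nonvanishing as a section of the top exterior power of $\,^bT^*M$; equivalently, writing a defining function $t$ for each component of $Z$, it has the local shape $\Theta = \tfrac{dt}{t^m}\wedge \nu$ with $\nu$ a nonvanishing $(n-1)$-form transverse to $t$. Applying the odd/even desingularization $df_\epsilon$ in place of $\tfrac{dt}{t^m}$ produces an honest $n$-form $\Theta_\epsilon = df_\epsilon \wedge \nu + \beta$; since $df_\epsilon$ vanishes transversally at $Z$ and $\nu$ is nonvanishing there, the desingularized form is a folded volume form in the sense of the paper, with critical set $\bar Z$.

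The key step is to show that equal $b^m$-cohomology classes for $\Theta_0$ and $\Theta_1$ translate into equal De Rham \emph{and} relative cohomology classes for their desingularizations $\Theta_{0,\epsilon}$ and $\Theta_{1,\epsilon}$. For this I would use the Laurent decomposition (as in the Remark following the desingularization formulas, and as done for $2$-forms in \cite{gmw1}): writing $\Theta_i = \tfrac{dt}{t^m}\wedge(\sum_{j} \pi^*(\hat\alpha_j^{(i)})t^j) + \beta_i$, equality of $b^m$-cohomology classes means $\Theta_0 - \Theta_1 = d\eta$ for a $b^m$-form $\eta$. I would desingularize the primitive $\eta$ as well, so that $\Theta_{0,\epsilon} - \Theta_{1,\epsilon} = d\eta_\epsilon + (\text{error supported where }t\ge\epsilon)$, and then argue that after adjusting $\epsilon$ the error is itself exact by a global primitive vanishing to the appropriate order along $Z$. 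The relative class is then controlled because $\eta_\epsilon$ pulls back to zero on $Z$: the factor $df_\epsilon$ degenerates at $Z$, so the $\tfrac{dt}{t^m}$-part of any desingularized primitive restricts to zero on $TM|_Z$, giving $i^*\eta_\epsilon = 0$.

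Once both cohomological hypotheses are verified for the desingularized pair, Theorem~\ref{main} applies verbatim and yields a diffeomorphism $\psi$ with $\psi^*\Theta_{1,\epsilon} = \Theta_{0,\epsilon}$ fixing $\bar Z$. The final step is to transport this back to the original $b^m$-Nambu forms. Here I would invoke the fact that desingularization is a well-defined operation up to equivalence and is, in the relevant sense, invertible near $Z$: a diffeomorphism conjugating the desingularizations and restricting to the identity on the critical set also conjugates the singular forms, because the singular and desingularized forms agree in structure away from the $\epsilon$-collar and the conjugating map is the identity on $Z$ itself. Assembling the local diffeomorphisms on the collars of the components $Z_1,\dots,Z_n$ with the Moser diffeomorphism on the complement produces the desired global $\varphi$ with $\varphi^*\Theta_1 = \Theta_0$.

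I expect the main obstacle to be the middle step, namely controlling the relative cohomology class under desingularization and ensuring the primitive of $\Theta_0 - \Theta_1$ can be chosen to vanish to second order in $t$ after desingularizing (so that Lemma~\ref{rela} is available and the Moser vector field vanishes along $\bar Z$). The delicate point is that desingularization is not canonical — it depends on the chosen Laurent decomposition and on $\epsilon$ — so one must check that the cohomological data fed into Theorem~\ref{main} is independent of these choices, and that the error terms introduced by replacing $\tfrac{dt}{t^m}$ with $df_\epsilon$ in both $\Theta_i$ and the primitive $\eta$ genuinely cancel in relative cohomology rather than merely in ordinary De Rham cohomology.
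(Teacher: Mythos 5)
Your strategy runs the desingularization in the wrong direction, and that is where the argument breaks. The paper does not prove this theorem by desingularizing: it invokes the direct Moser path method in the $b^m$-category (as in Scott's classification of $b^m$-surfaces \cite{S} and in \cite{mp1}), using the Laurent decomposition of a closed $b^m$-form and the identification $^{b^m}H^n(M)\cong H^n(M)\oplus (H^{n-1}(Z))^m$; desingularization only appears afterwards, in the \emph{forward} direction, to show that equivalent $b^{2k+1}$-Nambu structures have equivalent folded desingularizations. Your final step --- recovering $\varphi^*\Theta_1=\Theta_0$ from $\psi^*\Theta_{1,\epsilon}=\Theta_{0,\epsilon}$ --- has no justification. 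The Moser diffeomorphism produced by Theorem \ref{main} is the time-one flow of a vector field that is not supported outside the $\epsilon$-collar; it moves points precisely in the region where $\Theta_i$ and $\Theta_{i,\epsilon}$ differ, and being the identity on $Z$ itself does not make it intertwine the singular forms near $Z$. More structurally, desingularization loses information: the folded invariants live in $H^n(M)\oplus H^n(M,Z)$, which by the long exact sequence of the pair sees at most one copy of $H^{n-1}(Z)$, whereas the $b^m$-class carries $m$ copies of $H^{n-1}(Z)$. Hence for $m>1$ non-equivalent $b^m$-Nambu forms can have equivalent desingularizations, so no ``inverse desingularization'' of the conjugating diffeomorphism can exist in general, and assembling collar diffeomorphisms with the Moser one cannot repair this.

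Two smaller points. For even $m$ the function $f_\epsilon$ has nowhere-vanishing derivative, so $\Theta_{i,\epsilon}$ is an honest volume form rather than a folded one; only the odd case lands in the folded category, which is why the paper's compatibility theorem is stated for $b^{2k+1}$-structures. And the middle step you flag as delicate (controlling the relative class of $\Theta_{0,\epsilon}-\Theta_{1,\epsilon}$) is indeed carried out in the paper's last theorem by exhibiting the explicit primitive $\frac{2}{\epsilon^{2k+2}}\,t\,\mu\wedge dt$ vanishing on $Z$ --- but there that computation is used to pass \emph{from} the $b^m$-equivalence \emph{to} the folded equivalence, never back.
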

\begin{Remark}
In fact two of these $b^m$-Nambu structures are equivalent if and only if their $b^m$-cohomology classes coincide. This can be proved as it is done for surfaces in \cite{S} and the theorem can be stated as Theorem \ref{scott} replacing $b^m$-symplectic forms by $b^m$-Nambu structures. Since the $b^m$-Nambu structures of top degree are closed $b^m$-forms they admit a Laurent decomposition. It is detailed in section 5 of \cite{S} where  the class in $b^m$-cohomology $[\Theta]$ is identified with its Liouville-Laurent decomposition $([\Theta_{sm}],[\alpha_1],...,[\alpha_m])$. This is in fact the $b^m$-Mazzeo-Melrose isomorphism for the top degree
$$ ^{b^m}H^n(M) \cong H^n(M) \oplus (H^{n-1}(Z))^m .$$
Using $\alpha_1,...,\alpha_m$ the modular periods of $[\Theta]$ associated to each modular $(n-1)$-form can be determined and it can be proved that they are invariant as it is done in \cite{Mar} for $b$-Nambu structures.
\end{Remark}

We can now state a compatibility theorem between this classification and its desingularized form.

\begin{theorem}
Let $\Theta_0$ and $\Theta_1$ be two $b^{2k+1}$-Nambu structures in a $b^{2k+1}$-manifold that are equivalent then for all $\epsilon$ the $f_\epsilon$-desingularized forms are also equivalent as folded volume forms (i.e., there exists a diffeomorphism conjugating them).
\end{theorem}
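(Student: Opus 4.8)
The plan is to reduce the statement to the folded Moser theorem (Theorem \ref{main}) applied to the two desingularizations $(\Theta_0)_\epsilon$ and $(\Theta_1)_\epsilon$. First I would fix a common defining function $t$ for the shared hypersurface $Z$ and record that both desingularizations are genuine folded volume forms sharing the \emph{same} critical set: since $\Theta_0$ and $\Theta_1$ live on the same $b^{2k+1}$-manifold $(M,Z)$, each desingularization vanishes transversally exactly where $df_\epsilon$ does, namely along $Z$. Moreover, the two forms inducing the same orientation on $M$ and sharing the profile $f_\epsilon$, the induced orientations on each component of $Z$ agree. Thus the critical-set hypotheses of Theorem \ref{main} are automatic, and the entire problem reduces to verifying the two cohomological conditions $[(\Theta_0)_\epsilon]=[(\Theta_1)_\epsilon]$ and $[(\Theta_0)_\epsilon]_r=[(\Theta_1)_\epsilon]_r$.

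The key simplification is that for a top degree form both conditions are detected by integration. On the one hand $H^n(M)\cong\mathbb{R}$ is computed by the total integral, so the De Rham classes agree if and only if $\int_M (\Theta_0)_\epsilon=\int_M(\Theta_1)_\epsilon$. On the other hand, Lefschetz--Poincar\'e duality identifies $H^n(M,Z)$ with $H_0(M\setminus Z)$, so the relative class of a top form vanishing along $Z$ is detected by its integrals over the connected components $C_1,\dots,C_\ell$ of $M\setminus Z$; concretely, pairing with the relative fundamental cycles $[\overline{C_j}]\in H_n(M,Z)$ gives the numbers $\int_{C_j}(\Theta_i)_\epsilon$. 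Hence both cohomological conditions follow at once from the single family of equalities $\int_{C_j}(\Theta_0)_\epsilon=\int_{C_j}(\Theta_1)_\epsilon$ for every $j$, the De Rham condition being their sum.

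To establish these regional equalities I would use that equivalent $b^{2k+1}$-Nambu structures have the same $b^{2k+1}$-cohomology class (by the preceding theorem and its remark), so that $\Theta_0-\Theta_1=d\eta$ for a $b^{2k+1}$-form $\eta$ of degree $n-1$. Since desingularization with respect to the common $t$ is linear and commutes with $d$ (as in \cite{gmw1}, using the Laurent decomposition), one obtains $(\Theta_0)_\epsilon-(\Theta_1)_\epsilon=d(\eta_\epsilon)$, where $\eta_\epsilon$ is the desingularization of $\eta$. Applying Stokes on each region gives $\int_{C_j}(\Theta_0)_\epsilon-\int_{C_j}(\Theta_1)_\epsilon=\int_{\partial C_j}\eta_\epsilon=\int_{\partial C_j}i^*\eta_\epsilon$, and $i^*\eta_\epsilon$ is a closed $(n-1)$-form on $Z$ (its exterior derivative is the pullback to $Z$ of the top degree form $(\Theta_0)_\epsilon-(\Theta_1)_\epsilon$, hence vanishes). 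It therefore suffices to show that these boundary periods vanish, i.e. that $\int_{Z_r}i^*\eta_\epsilon=0$ on each component $Z_r$ of $Z$.

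The main obstacle is precisely this last vanishing, which is where the modular information enters. The periods $\int_{Z_r}i^*\eta_\epsilon$ must be matched with the modular periods $\int_{Z_r}\alpha_{-i}$ of the Laurent decomposition appearing in Scott's classification (Theorem \ref{scott}): the point is that $b^{2k+1}$-exactness of $\Theta_0-\Theta_1$ forces all of these residues to vanish, so the boundary periods vanish regardless of the (non-unique) choice of decomposition used in the desingularization. Controlling this cleanly, and checking that the conclusion is independent both of the decomposition and of the auxiliary primitive $\eta$, is the technical heart of the argument; this is also what replaces the (failing) naturality of desingularization under the equivalence diffeomorphism. Once it is in place the regional integrals of $(\Theta_0)_\epsilon$ and $(\Theta_1)_\epsilon$ coincide, both cohomological hypotheses of Theorem \ref{main} hold, and Theorem \ref{main} yields a diffeomorphism $\varphi$ with $\varphi^*(\Theta_1)_\epsilon=(\Theta_0)_\epsilon$, which is the desired equivalence of folded volume forms.
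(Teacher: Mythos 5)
Your overall strategy---reduce to Theorem \ref{main} by checking the two cohomological hypotheses---is the same as the paper's, but you verify those hypotheses by a different route (integration over the components of $M\setminus Z$ via Lefschetz duality, then Stokes), and that route has a genuine gap exactly where you flag ``the technical heart.'' After Stokes you need $\int_{Z_r} i^*\eta_\epsilon=0$ for every component $Z_r$ of $Z$, where $\eta_\epsilon$ is a desingularized primitive of $\Theta_0-\Theta_1$. You assert that this follows because $b^{2k+1}$-exactness kills the residues $\alpha_{-i}$ of $\Theta_0-\Theta_1$, but the form $i^*\eta_\epsilon$ is (since $df_\epsilon|_Z=0$) the pullback of the \emph{smooth part} of the chosen primitive $\eta$, not a residue of $\Theta_0-\Theta_1$; its periods on $Z_r$ are not controlled by the Laurent coefficients of $\Theta_0-\Theta_1$, and they visibly depend on the non-unique choices of $\eta$ and of its decomposition. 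There is also a prior unproved step: the identity $(\Theta_0)_\epsilon-(\Theta_1)_\epsilon=d(\eta_\epsilon)$ requires desingularization to commute with $d$ for forms of degree $n-1$ with compatibly chosen decompositions, which is not automatic and is not established in the text you cite. So the argument, as written, does not close.

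The paper avoids both problems by never desingularizing a primitive of the $b$-form. It fixes the decompositions $\Theta_j=\alpha_j\wedge\frac{dt}{t^{2k+1}}$ with a common defining function $t$, uses equality of the $b^{2k+1}$-classes to write $\alpha_0-\alpha_1=d\mu$, so that the desingularized difference is $d\mu\wedge df_\epsilon=d\bigl(\mu\wedge df_\epsilon\bigr)$; near $Z$ one has $df_\epsilon=\frac{2}{\epsilon^{2k+2}}\,t\,dt$, so the explicit primitive $\mu\wedge df_\epsilon$ vanishes identically along $Z$. This single computation delivers both $[(\Theta_0)_\epsilon]=[(\Theta_1)_\epsilon]$ and the equality of relative classes at once, with no duality or period argument needed. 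If you want to salvage your integration approach, you would have to produce a primitive with vanishing pullback to $Z$ anyway---which is precisely the paper's explicit construction---so the detour through $H_0(M\setminus Z)$ buys nothing here.
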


\begin{proof}
Since the forms are equivalent the classes  satisfy  $[\Theta_0] = [\Theta_1]$ in $b^{2k+1}$-cohomology.

Denote $t$ a defining function of $Z$. The forms $\Theta_0$ and $\Theta_1$ can be written close to any connected component of $\bar Z$ as:
$$ \Theta_j = \alpha_j' + \beta_j \wedge \frac{dt}{t^{2k+1}},  $$
where $t$ is a defining function of the component of $\bar Z$. Since $\alpha_j'$ is a $n$ form in $M$ it can be written as $\alpha_j' = \gamma_j \wedge dt = \gamma_j t^{2k+1} \wedge \frac{dt}{t^{2k+1}}$. Hence denoting as $\alpha_j := \gamma_j t^{2k+1} + \beta_j$, as in section 6.4 of \cite{GMP2}, the forms can be decomposed as
 $$\Theta_j= \alpha_j \wedge \frac{dt}{t^{2k+1}}.$$

Then $\Theta_1-\Theta_0 = (\alpha_0 - \alpha_1) \wedge \frac{dt}{t^{2k+1}}= d\mu \wedge \frac{dt}{t^{2k+1}}$ because they have the same $b^{2k+1}$-cohomology class. Once applying the desingularizing procedure, we obtain,
$$ \Theta_{1,\epsilon} - \Theta_{0,\epsilon} = d\mu \wedge df_{\epsilon}, $$
and the right  hand side looks locally as $\frac{2}{\epsilon^{2k+2}}t d\mu \wedge dt = d(\frac{2}{\epsilon^{2k+2}}t \mu \wedge dt) $.

\hfill \newline
We deduce that for any $\epsilon$ the forms $\Theta_{1,\epsilon}$ and $\Theta_{0,\epsilon}$ have the same cohomology class in $H^n(M)$ and same relative cohomology class in $H^n(M,Z)$, because they are exact with respect to a form $\beta=\frac{2}{\epsilon^{2k+2}}t \mu \wedge dt$ that vanishes at $Z$. Applying Theorem \ref{main} we deduce that these two forms are isomorphic as folded volume forms.
\end{proof}

 As a remark, observe that the desingularized forms we consider depend on the decomposition in use. We obtain a compatibility theorem for the classification of $b^{2k+1}$-Nambu structures. Thus equivalent $b^{2k+1}$-Nambu structures are sent to equivalent folded volume forms. When the dimension of the manifold is $2$, the compatibility is hence between $b^{2k+1}$-symplectic forms and folded symplectic forms.

\end{document}